\author{Mark Shusterman}
\title{Ranks of subgroups in \\ boundedly generated groups}
\newtheorem{theorem}{Theorem}[section]
\newtheorem{lemma}[theorem]{Lemma}
\newtheorem{proposition}[theorem]{Proposition}
\newtheorem{corollary}[theorem]{Corollary}
\numberwithin{equation}{section}
\newcommand{\lemref}[1]{\hyperref[#1]{Lemma \ref*{#1}}}
\newcommand{\thmref}[1]{\hyperref[#1]{Theorem \ref*{#1}}}
\newcommand{\propref}[1]{\hyperref[#1]{Proposition \ref*{#1}}}
\newcommand{\corref}[1]{\hyperref[#1]{Corollary \ref*{#1}}}
\def\moverlay{\mathpalette\mov@rlay}
\def\mov@rlay#1#2{\leavevmode\vtop{%
   \baselineskip\z@skip \lineskiplimit-\maxdimen
   \ialign{\hfil$\m@th#1##$\hfil\cr#2\crcr}}}
\newcommand{\charfusion}[3][\mathord]{
    #1{\ifx#1\mathop\vphantom{#2}\fi
        \mathpalette\mov@rlay{#2\cr#3}
      }
    \ifx#1\mathop\expandafter\displaylimits\fi}
\newcommand*{\defeq}{\mathrel{\rlap{%
                     \raisebox{0.27ex}{$\m@th\cdot$}}%
                     \raisebox{-0.27ex}{$\m@th\cdot$}}%
                     =}
\begin{document}

\maketitle

\abstract{We show that an infinite residually finite boundedly generated group has an infinite chain of finite index subgroups with ranks uniformly bounded, and give (sublinear) upper bounds on the ranks of arbitrary finite index subgroups of boundedly generated groups (examples which come close to achieving these bounds are presented). This proves a strong form of a conjecture of Abert, Jaikin-Zapirain, and Nikolov which asserts that the rank gradient of infinite boundedly generated residually finite groups is $0$. Furthermore, our first result establishes a variant of a conjecture of Lubotzky on the ranks of finite index subgroups of special linear groups over the integers, and is analogous to a  result of Pyber and Segal for solvable groups.

\section{Introduction}

Given a positive integer $m$, 
we say that a group $G$ is $m$-\textbf{boundedly generated} if there exist $g_1, \dots, g_m \in G$ 
such that the following equality of sets holds:
\begin{equation} \label{DefBGEq}
G = \langle g_1 \rangle \cdots \langle g_m \rangle.
\end{equation}
A group $G$ is said to be \textbf{boundedly generated} if it is $m$-boundedly generated for some positive integer $m$. The prototype of a boundedly generated group is $\mathrm{SL}_3(\mathbb{Z})$ (see \cite[Section 4.1]{BHV} for a proof), and more generally, 
for any integer $n \geq 3$ and a number field $K$, 
the group $\mathrm{SL}_n(O_K)$ is known to be boundedly generated (see \cite{AM}, \cite{CK1}, \cite{CK2}, \cite{CKP}, and \cite{W}). On the other hand, it is well known that $\mathrm{SL}_2(\mathbb{Z})$ is not boundedly generated (see \url{en.wikipedia.org/wiki/Boundedly_generated_group} for various proofs) and our work is a generalization of this fact. Still, $\mathrm{SL}_2(A)$ is boundedly generated for certain rings of integers $A$ (e.g. for those having infinitely many units), and bounded generation is a feature of many other arithmetic groups (see \cite{Bar}, \cite{DV}, \cite{ER1}, \cite{ER2}, \cite{H},  \cite{LM}, \cite{M}, \cite{SS}, \cite{T1}, \cite{T2}, and \cite{Z}). Bounded generation of arithmetic groups has been studied with relation to the congruence subgroup property (see \cite{Lub}, \cite{PR}, and \cite{R}), and in connection with Kazhdan's property (T) (see \cite[Section 4]{BHV}, and \cite{S}). The notion of bounded generation is also of independent interest as can be seen from \cite{ALP}, \cite{Mura}, \cite{NS}, \cite{PS},  and \cite{Su}.

Our first result is:

\begin{theorem} \label{FirstResThm}

Let $m$ be a positive integer, 
let $G$ be an $m$-boundedly generated group,
and let $L \leq G$ be a finite index subgroup. 
Then there exists a finite index subgroup $U \leq L$ which can be generated by $m$ elements.

\end{theorem}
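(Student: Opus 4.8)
The plan is to reduce to the case of normal $L$, and then to use the bounded generation of $G$ to exhibit, inside $L$, a finite-index subgroup which is a product of $m$ cyclic subgroups and is therefore generated by $m$ elements.

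First I would replace $L$ by its normal core $N$ in $G$, a normal finite-index subgroup contained in $L$; it then suffices to find a finite-index $U\le N$ generated by $m$ elements. Set $Q:=G/N$, let $d_i$ be the order of $g_iN$ in $Q$, and put $h_i:=g_i^{d_i}\in N$; note $|Q|\le d_1\cdots d_m<\infty$.

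The computational core is a ``carrying'' identity. Given $x\in N$, use \eqref{DefBGEq} to write $x=g_1^{a_1}\cdots g_m^{a_m}$ and split $a_i=d_iq_i+r_i$ with $0\le r_i<d_i$, so that $g_i^{a_i}=g_i^{r_i}h_i^{q_i}$. A direct rearrangement then yields
\[
x=s\cdot\prod_{i=1}^{m}\bigl(z_i^{-1}h_i^{q_i}z_i\bigr),\qquad s:=g_1^{r_1}g_2^{r_2}\cdots g_m^{r_m},\quad z_i:=g_{i+1}^{r_{i+1}}\cdots g_m^{r_m},
\]
where $s$ and the ``tails'' $z_i$ each range over a fixed finite set as $x$ varies over $N$. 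Hence $N$ is covered by finitely many left cosets of $W:=\langle z_i^{-1}h_iz_i:1\le i\le m,\ z_i\text{ a tail}\rangle$, and since $W\le N$ (as $h_i\in N$ and $N$ is normal) this already exhibits a finite-index subgroup $W$ of $G$ contained in $L$ --- but one presented with many more than $m$ generators, their number depending on $Q$.

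The crux, which I expect to be the main obstacle, is to trim the generating set of $W$ down to $m$: to remove its dependence on the conjugating tails. The cleanest route is to choose $N$ so that every relevant tail $z_i$ normalizes $\langle h_i\rangle$; then each family $\{z_i^{-1}h_iz_i\}$ generates the single cyclic group $\langle h_i\rangle$, so $W=\langle h_1\rangle\langle h_2\rangle\cdots\langle h_m\rangle$ is a product of $m$ cyclic subgroups --- in particular $m$-boundedly generated, hence generated by $m$ elements. Arranging such an $N$ is delicate, because the $d_i$, and therefore the set of tails, themselves depend on $N$; I would expect to secure it by an inductive, successive-approximation refinement of $N$, treating separately if necessary the ``abelian-type'' part of the structure, where one instead argues directly --- as in the solvable case --- that $h_1,\dots,h_m$ already generate a finite-index subgroup. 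Once a finite-index $U\le L$ that is a product of $m$ cyclic subgroups is in hand, it is generated by $m$ elements and the proof is complete.
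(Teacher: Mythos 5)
Your rearrangement identity is exactly the computation at the heart of the paper's proof (it is the paper's identity $\prod_i g_i^{k_i}\langle g_i^n\rangle = P\cdot\prod_\ell\langle (g_\ell^{P_\ell})^n\rangle$ with your tails playing the role of the conjugators), but the way you then package it leaves the real content of the theorem unproved. By lumping the conjugates $z^{-1}h_iz$ over \emph{all} possible tails $z$ into one subgroup $W$, you produce a finite-index subgroup whose generating set grows with $[G:N]$, and your plan for trimming it back to $m$ generators --- choosing $N$ so that every tail normalizes each $\langle h_i\rangle$, or an unspecified ``successive approximation'' refinement, or an appeal to the solvable case --- is not an argument: there is no reason such an $N$ exists for a general boundedly generated group, and the orders $d_i$ and the tails change each time you shrink $N$, which is exactly the circularity you acknowledge. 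As written, the step from ``finite-index subgroup inside $L$'' to ``finite-index subgroup inside $L$ generated by $m$ elements'' is missing, and that step is the theorem.

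The paper closes this gap with a different bookkeeping of the same identity plus one ingredient you never invoke: B. H. Neumann's covering lemma. Do not merge the conjugates into a single group. For each fixed residue vector $\vec r=(r_1,\dots,r_m)$ the tails $z_i(\vec r)$ are \emph{fixed} elements, so your identity places every $x$ with those residues inside one left coset $s(\vec r)\,W_{\vec r}$ of the subgroup $W_{\vec r}:=\langle\, z_i(\vec r)^{-1}h_i\,z_i(\vec r) : 1\le i\le m\,\rangle$, which is generated by exactly $m$ elements; moreover each generator lies in $L$ (the paper takes $n=\exp(G,L)$ uniformly for all the $g_i$, so each generator is an $n$-th power of an element of $G$ and hence lies in $L$ automatically, with no need for the normal core or normality anywhere). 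Thus $G$ is covered by finitely many cosets of the finitely many $m$-generated subgroups $W_{\vec r}\leq L$, and Neumann's lemma --- a group covered by finitely many cosets of subgroups has one of those subgroups of index at most the number of cosets --- selects some $W_{\vec r}$ of finite index (at most $n^m$). Fixing the residues so that each coset comes with its own $m$-generated subgroup, and letting Neumann's lemma decide which of these has finite index, is precisely what replaces the step your proposal leaves open.
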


The theorem asserts that the finite index subgroups of rank (smallest cardinality of a generating set) at most $m$ are abundant - they form a basis for the open neighborhoods of $1$ with respect to the profinite topology on $G$ (the coarsest topology for which any homomorphism to a finite group is continuous).

Another source of examples of boundedly generated groups is the family of finitely generated solvable groups of finite subgroup rank (i.e. those for which there exists a finite bound on the ranks of all finitely generated subgroups - see \cite{PS}). A converse is proved by Pyber and Segal (see \cite[Corollary 1.5]{PS}): A residually finite (i.e. a group which is Hausdorff with respect to its profinite topology) boundedly generated solvable group has finite subgroup rank. \thmref{FirstResThm} can be viewed as an analogue of their result for groups which are not necessarily solvable. We note that a conclusion as strong as the one in the result of Pyber and Segal (a uniform bound on the ranks of all finitely generated subgroups) is not achievable in the nonsolvable case, as can be seen from the example given in \ref{SecLB}.

In \cite[Section 4, Problem 1]{Lub0} Lubotzky conjectured that for any integer $n \geq 3$ and every finite index subgroup $L \leq \mathrm{SL}_n(\mathbb{Z})$, there exists a finite index subgroup $U \leq L$ of rank $2$. It has been shown in \cite{LR} that $\mathrm{SL}_3(\mathbb{Z})$ contains an infinite descending chain of finite index subgroups of rank $2$. Moreover, \cite{SV} establishes the conjecture for a larger class of groups than $\mathrm{SL}_n(\mathbb{Z})$ while allowing a rank bound greater than $2$ (namely, $3$). Similarly, \thmref{FirstResThm} considers a larger family of groups ($m$-boundedly generated) and replaces $2$ by the constant $m$. Another feature of \thmref{FirstResThm} is that it gives a bound on the index $[G : U]$ (see \lemref{MainLem}).

Recall that the rank gradient of a finitely generated group $G$ is defined to be:
\begin{equation} \label{DefRGEq}
\inf_{H} \frac{\mathrm{rank}(H)-1}{[G : H]} 
\end{equation}
where $H$ runs over all subgroups of finite index in $G$. This notion has been introduced by Lackenby in his study of manifolds (see \cite{Lack}) and became a subject of active research (see \cite{AGN}, \cite{AJN}, \cite{AN}, \cite{AG},  \cite{E}, \cite{Gi}, \cite{GK}, \cite{KN}, \cite{Lack2}, \cite{O}, \cite{P}, and \cite{Sch}). It is interesting to know for which groups the rank gradient is positive. Some examples are free nonabelian groups, $\mathrm{SL}_2(\mathbb{Z})$, surface groups, free products (see \cite[Proposition 3.2]{Lack}), and certain torsion groups (see \cite{O}, and \cite{Sch}). Nonexamples include finite groups, abelian groups, residually finite solvable groups, residually finite amenable groups (see \cite{AJN}), residually finite ascending HNN extensions (see \cite{Lack}), automorphism groups of free nonabelian groups (see \cite{KN}), and many arithmetic groups (e.g. $\mathrm{PSL}_2(\mathbb{Z}[i])$, $\mathrm{PSL}_2(\mathbb{Z}[e^{\frac{2\pi i}{3}}])$ - see \cite[Examples A,B]{AN}, and also higher rank nonuniform arithmetic groups such as $\mathrm{SL}_3(\mathbb{Z})$ - see \cite{SV}). These examples led Abert, Jaikin-Zapirain, and Nikolov to conjecture that the rank gradient of infinite residually finite boundedly generated groups is zero (see \cite{AJN}). In this direction, they use Luck's approximation theorem to prove a weaker result under the additional assumption of the existence of a finite presentation (see \cite[Proposition 11]{AJN}). \thmref{FirstResThm} establishes this conjecture in a strong form: the theorem can be applied inductively to construct subgroups of growing finite index and bounded rank. It follows at once (see \eqref{DefRGEq}) that the rank gradient vanishes. 

Our next result confirms an even stronger form of the aforementioned conjecture.

\begin{theorem} \label{SecResThm}

Let $m$ be a positive integer, 
let $G$ be an $m$-boundedly generated group,
and let $L \leq G$ be a subgroup of finite index $n$.
Then
\begin{equation} \label{GenBoundEq}
\mathrm{rank}(L) \leq 2m^2\sqrt{n\log(n)} + m.
\end{equation}
If in addition $L \lhd G$, then
\begin{equation} \label{NormalBoundEq}
\mathrm{rank}(L) \leq m\log_2(n) + m.
\end{equation} 
\end{theorem}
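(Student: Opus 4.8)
The plan is to derive both inequalities of \thmref{SecResThm} from \thmref{FirstResThm} in quantitative form: the construction behind that theorem yields a finite-index subgroup of rank at most $m$ \emph{together with} a bound on its index, recorded in \lemref{MainLem}, and an index bound is turned into a rank bound by an elementary device. The device is the observation that if $U \le L$ has finite index and $1 = t_1, t_2, \dots, t_k$ is a transversal for $U$ in $L$, then $L = \langle U, t_2, \dots, t_k\rangle$, so
\begin{equation*}
\mathrm{rank}(L) \;\le\; \mathrm{rank}(U) + [L:U] - 1.
\end{equation*}
Thus the problem reduces to producing $U \le L$ with $\mathrm{rank}(U) \le m$ and $[L:U] = [G:U]/n$ as small as one can manage.

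For \eqref{GenBoundEq} one sharpens the construction behind \thmref{FirstResThm} to get such a $U$ with $[L:U] \le 2m^2\sqrt{n\log n} + 1$; the displayed inequality then gives $\mathrm{rank}(L) \le m + [L:U] - 1 \le 2m^2\sqrt{n\log n} + m$, which is \eqref{GenBoundEq}. It is worth anticipating the origin of the unusual exponent. In building $U$ one cuts each cyclic factor $\langle g_i\rangle$ down to a subgroup contained in $L$, and then enlarges the group generated by these cut-down factors until it fills up $L$; the index of the enlargement is estimated as a product of a term that grows with how aggressively the factors $\langle g_i\rangle$ are truncated and a term controlled by a pigeonhole argument on the $n$-element coset space $L\backslash G$, the latter carrying a $\log n$ overhead but shrinking as the truncation is relaxed. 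Balancing the two forces the truncation parameter to be of order $\sqrt{n/\log n}$ and produces the bound of order $\sqrt{n\log n}$; keeping track of the $m$-dependence gives the stated constant $2m^2$.

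For \eqref{NormalBoundEq} the saving comes from normality: the coset space $L\backslash G$ is now the \emph{group} $Q := G/L$, and a finite group of order $n$ is generated by at most $\log_2 n$ elements, so $\mathrm{rank}(Q)\le\log_2 n$. Rerunning the construction with this in view --- each intersection $\langle g_i\rangle\cap L$ is cyclic, hence already accounted for by a single generator, so that the enlargement above need only catch up with a generating set of $Q$ rather than with all of $L\backslash G$ --- one obtains $U\le L$ with $\mathrm{rank}(U)\le m$ and $[L:U]\le m\log_2 n + 1$, whence $\mathrm{rank}(L)\le m+[L:U]-1\le m\log_2 n + m$.

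The step I expect to be the genuine obstacle is not the passage from indices to ranks --- the routine computation above --- but the index estimate itself, i.e.\ the analysis behind \lemref{MainLem}: showing that the rank-$\le m$ subgroup furnished by \thmref{FirstResThm} can be found inside $L$ with index as small as $[G:L]\cdot O_m\!\big(\sqrt{[G:L]\log[G:L]}\big)$ in general, and only $[G:L]\cdot O_m(\log[G:L])$ when $L\lhd G$. This turns on a careful analysis of the coset action of $G=\langle g_1\rangle\cdots\langle g_m\rangle$ on $L\backslash G$, and in particular on the combinatorial (pigeonhole) input responsible for the logarithmic factor.
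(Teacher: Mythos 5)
There is a genuine gap here, and in the normal case your key intermediate claim is demonstrably false. Because you convert index into rank via the linear device $\mathrm{rank}(L)\le \mathrm{rank}(U)+[L:U]-1$, you are forced to produce $U\le L$ with $d(U)\le m$ and $[L:U]\le 2m^2\sqrt{n\log n}+1$ (resp.\ $[L:U]\le m\log_2 n+1$ when $L\lhd G$). Neither index bound is proved --- the ``balancing the truncation parameter'' paragraph is a heuristic, not an argument --- and neither is what \lemref{MainLem} actually supplies: that lemma gives $[G:U]\le \mathrm{exp}(G,L)^m$, which by \propref{LandauProp} and \eqref{LanLiminEq} is only of size roughly $e^{1.1\,m^2\sqrt{n\log n}}$, exponentially larger than the polynomial-in-$n$ bound you attribute to it. Worse, the normal-case claim cannot be repaired: in the example of Section \ref{SecLB}, $L_r\lhd G=\mathrm{SL}_4(\mathbb{Z})$ surjects onto $(\mathbb{Z}/2\mathbb{Z})^r$, so any $U\le L_r$ with $d(U)\le m$ has image of order at most $2^m$ there, forcing $[L_r:U]\ge 2^{r-m}$, exponential in $r$, whereas $m\log_2(n_r)+1=O(r\log r)$; so no subgroup with your claimed index exists. (Your claim would also imply, via the paper's own \propref{RankBoundProp}, that $d(L)\le m+O(\log\log n)$ for normal $L$, contradicting the lower bound $d(L_r)\gtrsim \log(n_r)/\log\log(n_r)$ of Section \ref{SecLB}.)

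The missing idea is to trade your linear device for the logarithmic one: \propref{RankBoundProp} gives $d(L)\le d(U)+\log_2([L:U])$, and with that the weak index bound of \lemref{MainLem} suffices. This is the paper's route: the coset action $G\to S_n$ together with $G=\langle g_1\rangle\cdots\langle g_m\rangle$ gives $\mathrm{exp}(G,L)\le g(n)^m$ with $g$ Landau's function, hence $[L:U]\le[G:U]\le g(n)^{m^2}$ and $d(L)\le m+m^2\log_2 g(n)\le 2m^2\sqrt{n\log n}+m$ by Massias' bound \eqref{LanLiminEq}; when $L\lhd G$ one instead uses $\mathrm{exp}(G,L)\le n$ (\corref{ExpCor}) to get $[L:U]\le n^m$ and $d(L)\le m\log_2 n+m$. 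In particular the exponent $\sqrt{n\log n}$ does not arise from optimizing a truncation parameter; it is the logarithm of the maximal order of an element of $S_n$.
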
 

That is, the rank of subgroups grows sublinearly with the index (the rate of growth of the rank as a function of the index has been studied for other groups in \cite{AG} and \cite{GK}). This implies that the rank gradient of any chain (see \cite{AJN} or \cite{AN} for the precise definition) in a boundedly generated group is zero. In particular, the rank gradient for boundedly generated groups is independent of the choice of a Farber chain (see \cite{AN} for definitions). This independence is suspected to hold for all countable groups, and is an important special case of the fixed price problem (see \cite{AN},\cite{G}, and \cite{KM}). Note that fixed price has already been established for some infinite boundedly generated groups such as $\mathrm{SL}_2(A)$ for rings of integers $A$ with $|A^{*}|$ infinite (see \cite{KM}).

The bounds given in \thmref{SecResThm} are new even for arithmetic boundedly generated groups. In \ref{SecLB} we give an example which comes close to achieving the bound given in \eqref{NormalBoundEq}.

\section{Preliminaries}

Let us recall some definitions and facts needed in the sequel. The material presented in this section is quite standard and straightforward.

Let $G$ be a group, and let $g,x \in G$. We set $g^x \defeq x^{-1}gx$ and note that this defines a right action of $G$ on itself by automorphisms. For a subset $S \subseteq G$ we similarly put $S^x \defeq x^{-1}Sx$. For example, we have
\begin{equation} \label{ExpSubEq}
\langle g \rangle^x = x^{-1} \langle g \rangle x =  \langle x^{-1}gx \rangle = \langle g^x \rangle.
\end{equation}

We define the rank of a group $G$ to be the smallest cardinality of a generating set of $G$, and abbreviate our notation by $d(G) \defeq \mathrm{rank}(G)$. For a group epimorphism $\pi \colon G \to H$ we clearly have
\begin{equation} \label{RankQuotInEq}
d(H) \leq d(G).
\end{equation}

\begin{proposition} \label{RankBoundProp}

Let $L$ be a group,
and let $U \leq L$ be a finite index subgroup.
Then $d(L) \leq d(U) + \log_2([L : U])$.

\end{proposition}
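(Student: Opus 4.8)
The plan is to build a generating set of $L$ by enlarging a minimal generating set of $U$ one element at a time, and to control the number of elements added using the multiplicativity of the index. Write $n = [L:U]$ and fix a generating set $S$ of $U$ with $|S| = d(U)$ (if $d(U)$ is infinite the asserted inequality is trivial, since already any generating set of $U$ together with finitely many coset representatives generates $L$, so assume $d(U)$ finite). I would then construct, by recursion, an ascending chain $U = U_0 \leq U_1 \leq \cdots$ as follows: given $U_i$, stop if $U_i = L$, and otherwise choose any $g_{i+1} \in L \setminus U_i$ and set $U_{i+1} \defeq \langle U_i, g_{i+1} \rangle$, so that $U_i \subsetneq U_{i+1}$. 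Since $U_0 = U$ has finite index in $L$ and $U_0 \subseteq U_i$ for all $i$, every $U_i$ has finite index in $L$ (at most $n$); as the inclusions are strict the positive integers $[L : U_i]$ strictly decrease, so the recursion terminates, say with $U_k = L$.

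Next I would bound $k$. Each strict inclusion $U_i \subsetneq U_{i+1}$ is an inclusion of finite-index subgroups, hence $[U_{i+1} : U_i]$ is finite and at least $2$. By multiplicativity of the index along the chain $U_0 \leq U_1 \leq \cdots \leq U_k$,
\[
n = [L : U] = [U_k : U_0] = \prod_{i=0}^{k-1} [U_{i+1} : U_i] \geq 2^{k},
\]
so that $k \leq \log_2(n)$.

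Finally, by construction $L = U_k = \langle S \cup \{g_1, \dots, g_k\} \rangle$, whence $d(L) \leq |S| + k \leq d(U) + \log_2([L:U])$, as claimed. I do not expect a real obstacle here: the only points needing care are the termination of the recursion and the index bookkeeping, and both are immediate once one notes that all the $U_i$ lie between the finite-index subgroup $U$ and $L$.
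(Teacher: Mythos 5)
Your proof is correct and is essentially the paper's argument unrolled: the paper inducts on $[L:U]$, descending from $L$ through a maximal proper subgroup $M$ containing $U$ and using $[L:M]\geq 2$, while you ascend from $U$ by adjoining one element at a time and use $[U_{i+1}:U_i]\geq 2$ together with multiplicativity of the index. In both cases the point is the same: a strict chain of one-generator extensions from $U$ to $L$ has length at most $\log_2([L:U])$, so at most that many generators need to be added to a minimal generating set of $U$.
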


\begin{proof}

We induct on $[L : U]$, 
observing that for the base case (when the index is $1$) we have equality. 

Assume thus that $U \lneq L$ and let $M$ be a maximal proper subgroup of $L$ containing $U$. 
Take some $x \in L \setminus M$ and set
\begin{equation} \label{UltraMaxDefEq}
K \defeq \langle M \cup \{x\} \rangle.
\end{equation}
We have $M \lneq K \leq L$, so the maximality of $M$ implies that $K = L$. We infer that 
\begin{equation} \label{LMP1Eq}
d(L) = d(K) \stackrel{\mathclap{\eqref{UltraMaxDefEq}}}{\leq} d(M) + 1
\end{equation}
and induction tells us that 
\begin{equation} \label{BoundMEq}
d(M) \leq d(U) + \log_2([M : U]).
\end{equation}
Since $[L : M] \geq 2$ it follows that 
\begin{equation}
\begin{split}
d(L) &\stackrel{\mathclap{\eqref{LMP1Eq}}}{\leq} d(M) + 1 \stackrel{\ref{BoundMEq}}{\leq} d(U) + \log_2([M : U]) + 1 \\
&\leq d(U) + \log_2([M : U]) + \log_2([L : M]) \\
&= d(U) + \log_2([L : M][M : U]) 
= d(U) + \log_2([L : U]).
\end{split}
\end{equation}

\end{proof}

For a group $G$, and a subgroup $L \leq G$ we take 
\begin{equation} \label{ExpDefEq}
\mathrm{exp}(G,L)
\end{equation}
to be the least positive integer $n$ such that $a^n \in L$ for each $a \in G$. If such an integer does not exist, we set $\mathrm{exp}(G,L) \defeq \infty$. If we take $N$ to be the intersection of all the conjugates of $L$ in $G$, then $N \lhd G$ and the exponent of the group $G/N$ equals $\mathrm{exp}(G,L)$. Let us now show that $\mathrm{exp}(G,L)$ is finite in case that $[G : L]$ is.

\begin{proposition} \label{LagrangeProp}

Let $G$ be a group, 
let $a \in G$, 
and let $L \leq G$ be a subgroup of finite index $n$.
Then there is some integer $1 \leq r \leq n$ such that $a^r \in L$.

\end{proposition}

\begin{proof}

Define $f \colon \{0, 1, \dots, n\} \to G/L$ by $f(i) = a^{i}L.$ Since the cardinality of the domain of $f$ is larger than that of its codomain, $f$ is not injective. That is, there exist $0 \leq i < j \leq n$ such that $a^jL = a^iL$, which means that $a^{j - i} \in L$. We can thus take $r \defeq j - i$. 
 
\end{proof}

\begin{corollary} \label{ExpCor}

Let $G$ be a group, 
and let $L \leq G$ be a subgroup of finite index $n$.
Then $\mathrm{exp}(G,L) \leq \mathrm{lcm}(1, \dots, n).$
If in addition $L \lhd G$, then $\mathrm{exp}(G,L) \leq n$.

\end{corollary}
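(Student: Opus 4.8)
The plan is to deduce both inequalities directly from \propref{LagrangeProp} together with the definition of $\mathrm{exp}(G,L)$ recorded in \eqref{ExpDefEq}. For the first bound, fix an arbitrary $a \in G$. By \propref{LagrangeProp} there is an integer $1 \leq r \leq n$ with $a^r \in L$. Since $r$ divides $\mathrm{lcm}(1, \dots, n)$, we may write $\mathrm{lcm}(1, \dots, n) = rk$ for some positive integer $k$, and then $a^{\mathrm{lcm}(1,\dots,n)} = (a^r)^k \in L$ because $L$ is a subgroup. As $a \in G$ was arbitrary, the integer $\mathrm{lcm}(1, \dots, n)$ satisfies the defining property of $\mathrm{exp}(G,L)$; being the least such positive integer, $\mathrm{exp}(G,L)$ can only be at most $\mathrm{lcm}(1,\dots,n)$.

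For the second bound, assume in addition that $L \lhd G$. Then $G/L$ is a group of order $n$, so Lagrange's theorem applied to the cyclic subgroup of $G/L$ generated by $aL$ shows that $(aL)^n$ is the identity of $G/L$; equivalently $a^n \in L$, and this holds for every $a \in G$. Hence $n$ itself witnesses the defining property of $\mathrm{exp}(G,L)$, and therefore $\mathrm{exp}(G,L) \leq n$.

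I do not expect a genuine obstacle here: the argument is entirely routine. The only points that merit a moment's care are the passage from ``this particular integer works'' to the stated inequality for the \emph{least} such integer, and the observation that the normality hypothesis is precisely what allows one to form the quotient group $G/L$ and invoke Lagrange's theorem in it.
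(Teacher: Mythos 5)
Your proposal is correct and follows essentially the same route as the paper: the first bound via \propref{LagrangeProp} and divisibility of $r$ into $\mathrm{lcm}(1,\dots,n)$, and the second by identifying $\mathrm{exp}(G,L)$ with the exponent of the quotient $G/L$, which is at most $n$.
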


\begin{proof}

Let $a \in G$. By \propref{LagrangeProp}, there exists an $1 \leq r \leq n$ with $a^r \in L$. Since $r \ | \ \mathrm{lcm}(1, \dots, n)$ it follows that $a^{\mathrm{lcm}(1, \dots, n)} \in L$. 
If \mbox{$L \lhd G$}, then $\mathrm{exp}(G,L)$ is just the exponent of $G/L$ which is at most $|G/L| =  n$.

\end{proof}

In order to estimate the rate of growth of $\mathrm{lcm}(1, \dots, n)$ as a function of $n$, we set $k \defeq \lfloor \frac{n-1}{2} \rfloor$ and note that
$$\mathrm{lcm}(1, \dots, n) \int_{0}^1 (x(1-x))^k \leq 
\mathrm{lcm}(1, \dots, n) \int_{0}^1 4^{-k} 
= \frac{\mathrm{lcm}(1, \dots, n)}{4^k} $$ 
where the left hand side is a positive integer since the integral of a polynomial with integer coefficients and degree at most $n-1$ is a sum of fractions with denominators not exceeding $n$. 
It follows that the right hand side is at least $1$ so
$\mathrm{lcm}(1, \dots, n) \geq 4^k \geq 4^{\frac{n-3}{2}} = \frac{1}{8}2^n$
showing us that the first bound in \corref{ExpCor} is at least exponential in $n$. 
In case that the group is boundedly generated, a better bound exists.
For that, recall that Landau's function $g(n)$ associates to each $n \in \mathbb{N}$ the largest integer which is an order of an element in $S_n$. Alternatively, 
\begin{equation} \label{LandauDefEq}
g(n) \defeq \max \ \{|\langle \sigma \rangle| :  \sigma \in S_n\}.
\end{equation}

\begin{proposition} \label{LandauProp}

Let $m$ be a positive integer, 
let $G$ be an $m$-boundedly generated group,
and let $L \leq G$ be a subgroup of finite index $n$. 
Then 
\begin{equation} \label{BGboundEq}
\mathrm{exp}(G,L) \leq g(n)^m.
\end{equation}

\end{proposition}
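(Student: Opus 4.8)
The plan is to replace $L$ by its normal core and then use bounded generation to bound the order of the resulting finite quotient inside a symmetric group. First I would set $N$ to be the intersection of all conjugates of $L$ in $G$. As recalled just before the statement, $N \lhd G$ and $\mathrm{exp}(G,L)$ equals the exponent of $G/N$. Moreover, the action of $G$ by left translation on the coset space $G/L$ has kernel precisely $N$, so $G/N$ is isomorphic to a subgroup of $\mathrm{Sym}(G/L)$; since $|G/L| = n$, this exhibits $G/N$ as (an isomorphic copy of) a subgroup of $S_n$, and in particular $G/N$ is finite.

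Next I would note that bounded generation is inherited by quotients: choosing $g_1, \dots, g_m \in G$ with $G = \langle g_1 \rangle \cdots \langle g_m \rangle$ as in \eqref{DefBGEq}, and letting $\pi \colon G \to G/N$ be the canonical projection, applying $\pi$ to this equality of sets gives
\[
G/N = \langle \pi(g_1) \rangle \cdots \langle \pi(g_m) \rangle .
\]

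Now fix an embedding $G/N \hookrightarrow S_n$ as above, so that each $\pi(g_i)$ is identified with an element of $S_n$. By the definition \eqref{LandauDefEq} of Landau's function we have $|\langle \pi(g_i) \rangle| \leq g(n)$ for every $i$. Counting the elements of the product of the cyclic subgroups $\langle \pi(g_i) \rangle$ then yields
\[
|G/N| \leq \prod_{i=1}^{m} |\langle \pi(g_i) \rangle| \leq g(n)^m .
\]
Since the exponent of a finite group never exceeds (indeed, divides) its order, we conclude that $\mathrm{exp}(G,L) = \mathrm{exp}(G/N) \leq |G/N| \leq g(n)^m$, which is \eqref{BGboundEq}.

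I do not expect a genuine obstacle here. The entire content is the observation that, once one passes to the core $N$, bounded generation forces $|G/N|$ to be at most $g(n)^m$ — a much smaller bound than the trivial $|S_n| = n!$ — precisely because $G/N$ is not an arbitrary subgroup of $S_n$ but a product of $m$ cyclic subgroups, each generated by a permutation of $n$ points and hence of order at most $g(n)$. The supporting facts (that $\mathrm{exp}(G,L)$ is the exponent of $G/N$, that quotients of $m$-boundedly generated groups are $m$-boundedly generated, and that the exponent of a finite group is bounded by its order) are all elementary.
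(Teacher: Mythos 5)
Your proposal is correct and follows essentially the same route as the paper: the paper works with the image $H=\varphi(G)$ of the permutation representation on $G/L$ (whose kernel is your $N$), notes that bounded generation passes to $H=\langle \varphi(g_1)\rangle\cdots\langle \varphi(g_m)\rangle\leq S_n$, bounds $|H|\leq g(n)^m$ via Landau's function, and deduces $x^{|H|}\in\ker\varphi\subseteq L$, which is the same as your exponent-divides-order step for $G/N$. No gaps; the two write-ups differ only in whether one phrases the final step through the normal core or through the kernel of $\varphi$.
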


\begin{proof}

Let $x \in G$.
By \eqref{DefBGEq} there exist $g_1, \dots, g_m \in G$ such that
\begin{equation} \label{BGEq}
G = \langle g_1 \rangle \cdots \langle g_m \rangle.
\end{equation}
The action of $G$ on $G/L$ induces a homomorphism 
$\varphi \colon G \to S_n$. 
Set
\begin{equation} \label{DefhEq}
H \defeq \varphi(G), \quad h_i \defeq \varphi(g_i) \quad (1 \leq i \leq m)
\end{equation} 
so that
\begin{equation} \label{HstEq}
H \stackrel{\ref{DefhEq}}{=} \varphi(G) 
\stackrel{\ref{BGEq}}{=}
\langle \varphi(g_1) \rangle \cdots \langle \varphi(g_m) \rangle 
\stackrel{\ref{DefhEq}}{=} \langle h_1 \rangle \cdots \langle h_m \rangle.
\end{equation}
Since $h_1, \dots, h_m \in S_n$, it follows that 
\begin{equation} \label{OrdhEq}
|H| \stackrel{\ref{HstEq}}{=} |\langle h_1 \rangle \cdots \langle h_m \rangle|
\leq |\langle h_1 \rangle| \cdots |\langle h_m \rangle|
\stackrel{\ref{LandauDefEq}}{\leq} g(n)^m.
\end{equation}
As $\varphi(x) \in H$, it follows that 
$\varphi(x^{|H|}) = \varphi(x)^{|H|} = 1$
so $x^{|H|} \in \mathrm{Ker}(\varphi)$ and thus $x^{|H|} \in L$. By \eqref{OrdhEq}, 
$\mathrm{exp}(G,L) \leq |H| \leq g(n)^m$.
\end{proof}

In order to see that our new bound \eqref{BGboundEq} in \propref{LandauProp} is indeed better than the previous exponential bound from \corref{ExpCor}, we recall that (see \cite{Ma})
\begin{equation} \label{LanLiminEq}
\log(g(n)) \leq 1.05314\sqrt{n\log(n)}
\end{equation}
which implies that $g(n)$ grows subexponentially.

\section{Main lemma}

Before passing to our main lemma (\lemref{MainLem}), let us discuss two ingredients form its proof. 

The first one is just the use of several identities which hold in every group (see \eqref{LongEq}). A similar equality is used in some proofs of the fact that a finite index subgroup of a boundedly generated group is itself boundedly generated (see \cite[Lemma 2.1 (i)]{NS} and especially equation 2.2 therein which shortly states an assertion similar to \eqref{PasEq}, or \cite{T1}).

The second ingredient is a beautiful lemma of Bernhard Hermann Neumann (see \cite[Lemma 4.1]{Neumann}) stating that if a group is covered by finitely many cosets of subgroups, then the index of at least one of these subgroups does not exceed the number of cosets in the covering. This lemma has already been used with connection to bounded generation (see \cite[Theorem 2.2, Lemma 3.2]{NS}).

We are now ready for the main lemma which gives \thmref{FirstResThm}. Given a positive integer $n$, we use $[n]$ as a shorthand for $\{0, 1, \dots, n-1\}$. 

\begin{lemma} \label{MainLem}

Let $m$ be a positive integer, 
let $G$ be an $m$-boundedly generated group,
and let $L \leq G$ be a finite index subgroup. 
Then there exists a subgroup $U \leq L$ such that $d(U) \leq m$, and $[G : U] \leq \mathrm{exp}(G,L)^m$.

\end{lemma}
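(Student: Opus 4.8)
The plan is to start from a bounded generation expression $G = \langle g_1 \rangle \cdots \langle g_m \rangle$ and produce a single element from each factor which, taken together, generate a finite index subgroup inside $L$. Set $e \defeq \mathrm{exp}(G,L)$, so that $a^e \in L$ for every $a \in G$. The natural first guess is $U \defeq \langle g_1^e, \dots, g_m^e \rangle$; this clearly satisfies $d(U) \leq m$ and $U \leq L$, so the entire content of the lemma is the index bound $[G:U] \leq e^m$. To get at the index, I would analyze the coset space $U \backslash G$ (or $G/U$) using the product decomposition: writing an arbitrary $g \in G$ as $g_1^{a_1} \cdots g_m^{a_m}$, I want to reduce each exponent $a_i$ modulo $e$ at the cost of moving within a single $U$-coset. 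The point is that $g_i^e \in U$, but $g_i^e$ is generally not central, so one cannot simply pull it out; this is where the group identities advertised before the lemma (equation \eqref{LongEq}) and B.\,H.\ Neumann's covering lemma enter.

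The key algebraic step is a rewriting identity. Given $g = g_1^{a_1} \cdots g_m^{a_m}$ with, say, $a_1 = qe + r$ and $0 \le r < e$, I want to rewrite $g$ in a form where the first factor has exponent $r$ and the ``error'' $g_1^{qe}$ is conjugated past the remaining factors. Concretely, $g_1^{qe} g_2^{a_2} \cdots g_m^{a_m} = g_2^{a_2} \cdots g_m^{a_m} \cdot (g_1^{qe})^{g_2^{a_2}\cdots g_m^{a_m}}$, and the conjugate of $g_1^{qe}$ need not lie in $U$. The fix is to observe that a conjugate of $g_1^e$ is still an $e$-th power of a conjugate of $g_1$, hence still lies in $L$; iterating this bookkeeping across all $m$ factors, one shows that every $g \in G$ lies in a coset $U \cdot g_1^{r_1} \cdots g_m^{r_m}$ for some $r_i \in [e]$ — but possibly after replacing $U$ by a conjugate, or after noting the cosets involved are cosets of various conjugates of $U$. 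This is exactly the situation Neumann's lemma is designed for: $G$ is covered by at most $e^m$ cosets of subgroups, each of which is a conjugate of $U$ (hence has the same index as $U$), so by Neumann's lemma the minimal such index is at most $e^m$, giving $[G:U] \le e^m$. I would need to be slightly careful that the covering is genuinely by finitely many cosets of finite-index subgroups so that Neumann applies; since $L$ has finite index and $U$ is generated by $e$-th powers that land in $L$, combined with \propref{LandauProp}-type control, $[G:U]$ is a priori finite, so this is not an obstacle once the covering is set up.

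The main obstacle I anticipate is the precise form of the rewriting in \eqref{LongEq}: making the noncommutativity bookkeeping clean enough that the resulting covering of $G$ is literally by $e^m$ cosets of subgroups conjugate to $U$ (rather than by cosets of many different subgroups, which would weaken the bound). The identity presumably takes the shape $g_1^{a_1} g_2^{a_2} \cdots g_m^{a_m} = u \cdot g_1^{r_1} g_2^{r_2} \cdots g_m^{r_m}$ where $u$ is an explicit product of conjugates of the $g_i^e$ — conjugates by elements that themselves are products of the $g_j$'s — and one must verify each such conjugating element can be absorbed, so that $u$ lies in a controlled conjugate of $U$. Once that identity is written down and verified by direct computation in the free group on $g_1, \dots, g_m$, the counting is immediate and Neumann's lemma finishes the proof; the bounds $d(U) \le m$ and $[G:U] \le \mathrm{exp}(G,L)^m$ then follow, and feeding in \propref{LandauProp} and \eqref{LanLiminEq} will later yield the quantitative statements of \thmref{FirstResThm} and \thmref{SecResThm}.
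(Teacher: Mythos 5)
Your skeleton --- cover $G$ using the bounded-generation product with exponents reduced modulo $e \defeq \mathrm{exp}(G,L)$, rewrite by pushing the $e$-th-power parts across the remaining factors via conjugation, and invoke B.~H.~Neumann's covering lemma --- is the same as the paper's. But your final step has a genuine gap: you fix $U \defeq \langle g_1^e,\dots,g_m^e\rangle$ at the outset and claim the covering consists of cosets of subgroups \emph{conjugate to this one $U$}, ``hence of the same index as $U$''. That identification fails in general. Carrying out the rewriting gives, for each reduced exponent vector $(r_1,\dots,r_m)\in[e]^m$, a coset of the subgroup $\big\langle (g_1^e)^{t_1},\dots,(g_m^e)^{t_m}\big\rangle$, where $t_\ell$ is the tail product $g_{\ell+1}^{r_{\ell+1}}\cdots g_m^{r_m}$; the $m$ generators are conjugated by \emph{different} elements, so this subgroup is in general not of the form $U^x$ for any single $x$. (For $m=2$ it happens to be, since $g_2$ commutes with $g_2^e$, but not for $m\ge 3$.) Consequently Neumann's lemma applied to your covering does not bound $[G:\langle g_1^e,\dots,g_m^e\rangle]$: it only bounds the index of one of these varying subgroups, and your plan as written does not deliver the inequality for your chosen $U$.

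The repair is precisely why the lemma is stated as ``there exists a subgroup $U\le L$'' rather than naming one: do not insist on $\langle g_1^e,\dots,g_m^e\rangle$. Every subgroup occurring in the covering is generated by at most $m$ elements of the form $(g_\ell^{t_\ell})^e$ (a conjugate of an $e$-th power is the $e$-th power of a conjugate), and each such element lies in $L$ by the definition of $\mathrm{exp}(G,L)$. So whichever subgroup Neumann's lemma singles out --- call it $U$ --- already satisfies $U\le L$, $d(U)\le m$, and $[G:U]\le e^m$, which is all that is claimed. Your worry that a covering ``by cosets of many different subgroups \dots would weaken the bound'' is therefore misplaced: Neumann's lemma is indifferent to whether the subgroups coincide. (Also, Neumann's lemma needs no a priori finiteness of any index, so the appeal to \propref{LandauProp}-type control at that point is superfluous.)
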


\begin{proof}

By \eqref{DefBGEq}, there exist $g_1, \dots, g_m \in G$ such that 
\begin{equation} \label{BG2Eq}
G = \langle g_1 \rangle \cdots \langle g_m \rangle.
\end{equation}
Set $n \defeq \mathrm{exp}(G,L)$ (which is finite by \corref{ExpCor}), and note that for each $g \in G$ we have
\begin{equation} \label{ModEq}
\langle g \rangle = \bigcup_{k \in [n]} g^{k}\langle g^n \rangle
\end{equation} 
and thus 
\begin{equation} \label{CovEq}
G \stackrel{\ref{BG2Eq}}{=} \prod_{i=1}^m \langle g_i \rangle 
\stackrel{\ref{ModEq}}{=} \prod_{i=1}^m \bigcup_{k_i \in [n]} g_i^{k_i}\langle g_i^n \rangle 
= \bigcup_{\vec{k} \in [n]^m} \  \prod_{i=1}^m g_i^{k_i}\langle g_i^n \rangle.
\end{equation}
For $1 \leq r \leq m, \ 1 \leq \ell \leq m + 1,$ and $\vec{k} \in [n]^m$ set 
\begin{equation} \label{Peq}
P_{\ell,r}^{\vec{k}} \defeq  \prod_{s = \ell}^r g_s^{k_s} \in G
\end{equation}
and observe that for $1 \leq r \leq m$ we have
\begin{equation} \label{RecEq}
P_{r+1,r}^{\vec{k}} \stackrel{\ref{Peq}}{=} 1
\end{equation}
while for $1 \leq r \leq m - 1, \ 1 \leq \ell \leq r + 1$ we have
\begin{equation} \label{Rec2Eq}
P_{\ell,r}^{\vec{k}}g_{r+1}^{k_{r+1}} \stackrel{\ref{Peq}}{=} P_{\ell,r+1}^{\vec{k}}.
\end{equation}
For $1 \leq r \leq m$ and $\vec{k} \in [n]^m$ define the following subsets of $G$:
\begin{equation} \label{SubEq}
S_r^{\vec{k}} \defeq 
P_{1,r}^{\vec{k}}
\Bigg[ \prod_{\ell=1}^r \langle (g_\ell^{P_{\ell+1,r}^{\vec{k}}})^n \rangle \Bigg] 
\Bigg[ \prod_{t=r+1}^m g_t^{k_t} \langle g_{t}^n \rangle \Bigg] \subseteq G.
\end{equation}
We show now that for $1 \leq r \leq m-1$ we have $S_{r}^{\vec{k}}  = S_{r+1}^{\vec{k}}$:

\begin{equation} \label{LongEq}
\begin{split}
S_{r}^{\vec{k}} &\stackrel{\ref{SubEq}}{=} 
P_{1,r}^{\vec{k}}
\Bigg[ \prod_{\ell=1}^r \langle (g_\ell^{P_{\ell+1,r}^{\vec{k}}})^n \rangle \Bigg] 
\Bigg[ \prod_{t=r+1}^m g_t^{k_t} \langle g_{t}^n \rangle \Bigg] \\
&= P_{1,r}^{\vec{k}}
g_{r+1}^{k_{r+1}}g_{r+1}^{-k_{r+1}} 
\Bigg[ \prod_{\ell=1}^r \langle (g_\ell^{P_{\ell+1,r}^{\vec{k}}})^n \rangle \Bigg] 
g_{r+1}^{k_{r+1}} \langle g_{r+1}^n  \rangle 
\Bigg[ \prod_{t=r+2}^m g_t^{k_t} \langle g_{t}^n \rangle \Bigg] \\
&\overset{\ref{Rec2Eq}}{\underset{\ref{RecEq}}{=}} P_{1,r+1}^{\vec{k}}
\Bigg[ \prod_{\ell=1}^r \langle (g_\ell^{P_{\ell+1,r}^{\vec{k}}})^n \rangle \Bigg]^{g_{r+1}^{k_{r+1}}} 
\langle (g_{r+1}^{P_{r+2,r+1}^{\vec{k}}})^n  \rangle 
\Bigg[ \prod_{t=r+2}^m g_t^{k_t} \langle g_{t}^n \rangle \Bigg] \\
&\stackrel{\ref{ExpSubEq}}{=} P_{1,r+1}^{\vec{k}}
\Bigg[ \prod_{\ell=1}^r \langle (g_\ell^{P_{\ell+1,r}^{\vec{k}}g_{r+1}^{k_{r+1}}})^n \rangle  \Bigg]
\langle (g_{r+1}^{P_{r+2,r+1}^{\vec{k}}})^n  \rangle 
\Bigg[ \prod_{t=r+2}^m g_t^{k_t} \langle g_{t}^n \rangle \Bigg] \\
&\stackrel{\ref{Rec2Eq}}{=} P_{1,r+1}^{\vec{k}}
\Bigg[ \prod_{\ell=1}^r \langle (g_\ell^{P_{\ell+1,r+1}^{\vec{k}}})^n \rangle  \Bigg]
\langle (g_{r+1}^{P_{r+2,r+1}^{\vec{k}}})^n  \rangle 
\Bigg[ \prod_{t=r+2}^m g_t^{k_t} \langle g_{t}^n \rangle \Bigg] \\
&= P_{1,r+1}^{\vec{k}}
\Bigg[ \prod_{\ell=1}^{r+1} \langle (g_\ell^{P_{\ell+1,r+1}^{\vec{k}}})^n \rangle  \Bigg]
\Bigg[ \prod_{t=r+2}^m g_t^{k_t} \langle g_{t}^n \rangle \Bigg] \stackrel{\ref{SubEq}}{=} S_{r+1}^{\vec{k}}.
\end{split}
\end{equation}
It follows that $S_{r}^{\vec{k}}$ is independent of $r$, so in particular, $S_{1}^{\vec{k}} = S_{m}^{\vec{k}}$. This equality, and \ref{SubEq} tell us that
\begin{equation} \label{PasEq}
\prod_{i=1}^m g_i^{k_i}\langle g_i^n \rangle = 
P_{1,m}^{\vec{k}}
\Bigg[ \prod_{\ell=1}^m \langle (g_\ell^{P_{\ell+1,m}^{\vec{k}}})^n \rangle \Bigg].
\end{equation}
Taking the union in \ref{PasEq} over all values of $\vec{k} \in [n]^m$ we get:
\begin{equation}
\begin{split} 
G &\stackrel{\ref{CovEq}}{=} 
\bigcup_{\vec{k} \in [n]^m} \ \prod_{i=1}^m g_i^{k_i}\langle g_i^n \rangle \\
&\stackrel{\ref{PasEq}}{=} \bigcup_{\vec{k} \in [n]^m} 
P_{1,m}^{\vec{k}}
\Bigg[ \prod_{\ell=1}^m \langle (g_\ell^{P_{\ell+1,m}^{\vec{k}}})^n \rangle \Bigg] \\
&\subseteq
\bigcup_{\vec{k} \in [n]^m} 
P_{1,m}^{\vec{k}}
\Bigg \langle (g_\ell^{P_{\ell+1,m}^{\vec{k}}})^n \Bigg \rangle_{\ell=1}^m
\end{split}
\end{equation} 
so $G$ is covered by finitely many cosets of subgroups. By \cite[Lemma 4.1]{Neumann}, the index in $G$ of one of these subgroups, say $U$, is at most the number of cosets in the covering which is $|[n]^m| = n^m$. By \eqref{ExpDefEq}, the $n$-th power of an element in $G$ must belong to $L$, so all generators of $U$ are in $L$. It follows that $U \leq L$, and visibly $d(U) \leq m$.

\end{proof}

Let us now deduce \thmref{SecResThm}.

\begin{corollary} \label{MainCor}

Let $m$ be a positive integer, 
let $G$ be an $m$-boundedly generated group,
and let $L \leq G$ be a subgroup of finite index $n$.
Then
\begin{equation} \label{MainCor1stGoal}
d(L) \leq 2m^2\sqrt{n\log(n)} + m.
\end{equation}
If in addition $L \lhd G$, then $d(L) \leq m\log_2(n) + m$. 

\end{corollary}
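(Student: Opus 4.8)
The corollary follows by combining the main lemma (\lemref{MainLem}) with the rank-quotient inequality of \propref{RankBoundProp} and the exponent bounds already in hand. The single subgroup $U \le L$ produced by \lemref{MainLem} has $d(U) \le m$ and $[G:U] \le \mathrm{exp}(G,L)^m$. To recover a bound on $d(L)$ itself, I would apply \propref{RankBoundProp} to the pair $U \le L$, giving
\[
d(L) \le d(U) + \log_2([L:U]) \le m + \log_2([L:U]).
\]
Since $[L:U] = [G:U]/[G:L] = [G:U]/n \le [G:U] \le \mathrm{exp}(G,L)^m$, we get $d(L) \le m + m\log_2(\mathrm{exp}(G,L))$. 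Everything now reduces to bounding $\mathrm{exp}(G,L)$.

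For the normal case, $\mathrm{exp}(G,L)$ is the exponent of $G/L$, which by \corref{ExpCor} is at most $n$; substituting yields $d(L) \le m + m\log_2(n)$, which is exactly the second claimed bound. For the general case I would invoke \propref{LandauProp}, which gives $\mathrm{exp}(G,L) \le g(n)^m$, hence $\log_2(\mathrm{exp}(G,L)) \le m\log_2(g(n)) = m\log(g(n))/\log 2$. Feeding in Massias' estimate \eqref{LanLiminEq}, namely $\log(g(n)) \le 1.05314\sqrt{n\log(n)}$, gives
\[
d(L) \le m + m \cdot m \cdot \frac{1.05314}{\log 2}\sqrt{n\log(n)}.
\]

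**The remaining obstacle — the constant.** The only nonroutine point is checking that $m \cdot m \cdot \frac{1.05314}{\log 2} \le 2m^2$, i.e. that $1.05314/\log 2 \le 2$. Since $\log 2 \approx 0.6931$, we have $1.05314/0.6931 \approx 1.5195 < 2$, so this holds with room to spare; one should be slightly careful, though, that $\log$ in \eqref{LanLiminEq} and in the target \eqref{MainCor1stGoal} denote the same (natural) logarithm, which is consistent with the paper's usage of $\log$ versus $\log_2$. This is the step I'd expect to warrant the most attention, but it is elementary. A minor wrinkle: \eqref{LanLiminEq} and \propref{LandauProp} may require $n$ large enough (or $n \ge 1$) for the stated inequalities; for the finitely many small values of $n$ the bound can be verified directly, or absorbed into the additive $+m$ term. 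With these checks in place the proof is a short substitution, so I would present it as: apply \lemref{MainLem}, then \propref{RankBoundProp}, then \corref{ExpCor} (normal case) or \propref{LandauProp} together with \eqref{LanLiminEq} (general case), and finally bound the constant.
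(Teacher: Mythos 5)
Your proposal is correct and follows essentially the same route as the paper: apply \lemref{MainLem} to obtain $U \leq L$ with $d(U) \leq m$, bound $[L:U]$ via $\mathrm{exp}(G,L)^m$ using \corref{ExpCor} (normal case) or \propref{LandauProp} with \eqref{LanLiminEq} (general case), and finish with \propref{RankBoundProp}, checking that $1.05314\log_2(e) \leq 2$. The paper performs exactly this calculation, so there is nothing to add.
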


\begin{proof}

By \lemref{MainLem}, there exists a subgroup $U \leq L$ of rank at most $m$ and 
\begin{equation} \label{IndexBoundedEq}
[L : U] \leq [G : U] 
\stackrel{\ref{MainLem}}{\leq} \mathrm{exp}(G,L)^m 
\stackrel{\ref{LandauProp}}{\leq} (g(n)^m)^m = g(n)^{m^2}.
\end{equation}
Therefore, 
\begin{equation}
\begin{split}
d(L) &\stackrel{\ref{RankBoundProp}}{\leq} d(U) + \log_2([L : U]) \leq m + \log_2([L : U]) \\
&\stackrel{\ref{IndexBoundedEq}}{\leq} m + \log_2(g(n)^{m^2}) = m + m^2\log_2(g(n)) \\ 
&= m + m^2\log(g(n))\log_2(e) \\
&\stackrel{\ref{LanLiminEq}}{\leq} m + m^2 \cdot 1.1 \cdot \sqrt{n\log(n)} \cdot 1.5 \\
&\leq 2m^2\sqrt{n\log(n)} + m. 
\end{split}
\end{equation}
If $L \lhd G$ then 
\begin{equation} \label{NormalIndexBoundedEq}
[L : U] \stackrel{\ref{IndexBoundedEq}}{\leq} \mathrm{exp}(G,L)^m \stackrel{\ref{ExpCor}}{\leq} n^m
\end{equation} so 
\begin{equation}
\begin{split}
d(L) &\stackrel{\ref{RankBoundProp}}{\leq} d(U) + \log_2([L : U]) \leq m + \log_2([L : U]) \\
&\stackrel{\ref{NormalIndexBoundedEq}}{\leq} m + \log_2(n^m) \leq m\log_2(n) + m.
\end{split}
\end{equation}

\end{proof}

\section{Lower bound} \label{SecLB}

We show that $G \defeq \mathrm{SL}_4(\mathbb{Z})$ comes asymptotically close to achieving the bound \eqref{NormalBoundEq} from \thmref{SecResThm}. As we are about to see, a choice of a different arithmetic group could have been made as well. Our example is an elaboration of the nonexistence example given in \cite{SuV}.

Let $r$ be a positive integer, and let $p_1, \dots, p_r$ be the first $r$ odd primes. For $1 \leq i \leq r$ let $T_i  \lhd \mathrm{SL}_4(\mathbb{F}_{p_i})$ be the subgroup containing only $\pm I_4$. Set 
\begin{equation} \label{LowerDefsEq}
H_r \defeq \prod_{i=1}^r \mathrm{SL}_4(\mathbb{F}_{p_i}) \quad Z_r \defeq \prod_{i=1}^r T_i \lhd H_r
\end{equation} 
and recall that by the strong approximation property (see \cite{K}), the natural projection $\pi_r \colon G \to H_r$ is a surjection. Put $L_r \defeq \pi_r^{-1}(Z_r)$ and note that
\begin{equation} \label{LowerRankBoundEq}
d(L_r) \stackrel{\ref{RankQuotInEq}}{\geq} d(Z_r) \stackrel{\ref{LowerDefsEq}}{=} d((\mathbb{Z}/2\mathbb{Z})^r) = r. 
\end{equation}
Furthermore,
\begin{equation} \label{LowerIndexBoundEq}
\begin{split}
[G : L_r] &\stackrel{\ref{LowerDefsEq}}{=} [H_r : Z_r] \leq 
|H_r| \stackrel{\ref{LowerDefsEq}}{\leq} \prod_{i=1}^r |\mathrm{SL}_4(\mathbb{F}_{p_i})| 
\leq \prod_{i=1}^r p_i^{4^2} \\
&= \Bigg[ \prod_{i=1}^r p_i \Bigg]^{16} \leq 
\Big((r+1)^{2r+2}\Big)^{16} \leq (2r)^{48r}.
\end{split}
\end{equation}
On the other hand,
\begin{equation} \label{LowerIndexBound2Eq}
\begin{split}
[G : L_r] &\stackrel{\ref{LowerDefsEq}}{=} [H_r : Z_r] = 
\frac{|H_r|}{|Z_r|} \stackrel{\ref{LowerDefsEq}}{=} 2^{-r}\prod_{i=1}^r |\mathrm{SL}_4(\mathbb{F}_{p_i})| 
\geq 2^{-r}\prod_{i=1}^r p_i \\
&\geq r^{\frac{r}{2}}.
\end{split}
\end{equation}
Note that in both \eqref{LowerIndexBoundEq}, and \eqref{LowerIndexBound2Eq} we have used the bound (valid for large values of $r$) on primorials which appears in \url{en.wikipedia.org/wiki/Primorial}. We thus have 
\begin{equation} \label{My48Eq}
\frac{\log([G : L_r])}{\log\log([G : L_r])} \overset{\ref{LowerIndexBoundEq}}{\underset{\ref{LowerIndexBound2Eq}}{\leq}} 
\frac{48r\log(2r)}{\log(\frac{r}{2}) + \log\log(r)} \leq
\frac{48r\log(2r)}{\log(2r)} = 48r.
\end{equation}
Setting $n_r = [G : L_r]$, we see that
\begin{equation}
d(L_r) \stackrel{\ref{LowerRankBoundEq}}{\geq} r = \frac{1}{48}48r \stackrel{\ref{My48Eq}}{\geq} \frac{\log(n_r)}{48\log\log(n_r)}
\end{equation} 
giving us an asymptotic growth of the rank as a function of the index which is only slightly slower than the upper bound \eqref{NormalBoundEq}.

\section*{Acknowledgments}

I would like to sincerely thank Miklos Abert, Andrei Jaikin-Zapirain, and Nikolay Nikolov for many discussions about this work.

Mark Shusterman\\

markshus@mail.tau.ac.il

\end{document}